\theoremstyle{plain}
\newtheorem{thm}{Theorem}[section]
\newtheorem{lem}[thm]{Lemma}
\newtheorem{prop}[thm]{Proposition}
\newtheorem{cor}[thm]{Corollary}
\newtheorem{conj}[thm]{Conjecture}
\newtheorem{qu}[thm]{Problem}
\theoremstyle{definition}
\newtheorem{defn}[thm]{Definition}
\theoremstyle{remark}
\newtheorem*{rem}{Remark}
\newcommand{\nc}{\newcommand}
\nc{\dmo}{\DeclareMathOperator}
\DeclareMathOperator{\Conf}{Conf}
\DeclareMathOperator{\Fix}{Fix}
\DeclareMathOperator{\Homeo}{Homeo}
\DeclareMathOperator{\ZZ}{\mathbb{Z}}
\DeclareMathOperator{\FF}{\mathbb{F}}
\renewcommand{\epsilon}{\varepsilon}
\nc{\coloneq}{\mathrel{\mathop:}\mkern-1.2mu=}
\nc{\margin}[1]{\marginpar{\scriptsize #1}}
\nc{\para}[1]{\medskip\noindent\textbf{#1.}}
\nc{\red}[1]{\textcolor{red}{#1}}
\title{Actions of homeomorphism groups of manifolds admitting a nontrivial finite free action}
\address{\newline Department of Mathematics   \newline California Institute of Technology   \newline Pasadena, CA 91125,  USA}
\author{Lei Chen}
\date{September 26th}
\email{L.C. chenlei@caltech.edu}
\begin{document}
\maketitle
\begin{abstract}
In this paper, we study the action of $\Homeo_0(M)$, the identity component of the group of homeomorphisms of an $n$-dimensional manifold $M$ with an $\FF_p$-free action, on another manifold $N$ of dimension $n+k<2n$. We prove that if $M$ is not an $\FF_p$-homology sphere, then $N\cong M\times K$ for a homology manifold $K$ such that the action of $\Homeo_0(M)$ on $M$ is standard and on $K$ is trivial. In particular, for $M=S^n$ a sphere, any nontrivial action is a generalization of the ``coning-off" construction.
\end{abstract}
\section{Introduction}
For a manifold $M$, denote by $\Homeo(M)$ the group of homeomorphisms of $M$ and by $\Homeo_0(M)$ the identity component of the group of homeomorphisms of $M$.  Let $M,N$ be two connected, closed, topological manifolds, and let \[
\rho: \Homeo_0(M)\to \Homeo(N)
\] 
be a group homomorphism. Recently the authors \cite{CM} showed that every orbit of the action $\rho$ is a ``submanifold" (continuous, injective image of a manifold) of $N$; in particular, every orbit is a continuous, injective image of a cover of $\Conf_m(M)$ for some $m$, where $\Conf_m(M)$ denotes the space of unordered $m$-tuples of distinct points on $M$. This orbit rigidity result means that any nontrivial action $\rho$ is, in some sense, geometric. It reduces the study of $\Homeo_0(M)$ acting on $N$ to the study of how to decompose $N$ into a union of covers of $\Conf_m(M)$ so that actions of $\Homeo_0(M)$ on pieces glue together. Do all possible decompositions geometric in some sense? We are puzzled by the following question: Given $M$, for which manifolds $N$, do there exist a nontrivial $\rho$? How many different actions can we find?

\para{Some results in low dimensions}
The followings have been proved in \cite{CM}.
\begin{itemize}
\item if $\dim(M)>\dim(N)$, then there is no nontrivial $\rho$; 
\item if $\dim(M)=\dim(N)$, the existence of a nontrivial $\rho$ is equivalent to the fact that an admissible cover (which is defined in \cite[Section 2.1]{CM}) of $M$ can be embedded in $N$; 
\item if $\dim(M)+1=\dim(N)$ and every admissible cover of $M$ is compact, then $\rho$ exists if and only if $M=S^n$. 
\end{itemize}
What will happen when $\dim(N)$ is bigger?

\para{A zoo of examples}
\begin{itemize}
\item \textbf{Product action:} $\Homeo_0(M)$ acts on $M^k\times K$ (product of $k$ copies of $M$ with $K$) where the action is diagonal on $M^k$ and trivial on $K$. 
\item \textbf{Product action quotient by Bing's shrinking:} Let $p: K\to K'$ be a Bing's shrinking map \cite[Chapter 2]{decompositions}. Define $M_p:=M\times M\times K/\sim$, where $(m,m,k_1)\sim (m,m,k_2)$ if $p(k_1)=p(k_2)$. The action of $\Homeo_0(M)$ on $M\times M\times K/\sim$ is the diagonal on $M\times M$ and trivial on $K$. The fact that $M_p$ is a topological manifold follows from the fact that $p$ is a Bing's shrinking map.
\item \textbf{The suspension action:} $\Homeo_0(S^n)$ acts on $S^{n+l}$ by lifting the standard action to the $l$-fold suspension $\Sigma^l S^n=S^{n+l}$.
\item \textbf{Double suspension action:} If $H$ is an $n$-dimensional homology manifold, then the double suspension theorem \cite{doublesuspension} implies that there is a homeomorphism $\Sigma^2H\cong S^{n+2}$. This means that $\Homeo_0(H)$ acts on $S^{n+2}$ nontrivially with global fixed point set a circle. 
\end{itemize}
This is a showcase that the action can be quite weird, we can multiply and quotient under Bing's shrinking map. Therefore we ask the following general question.
\begin{qu}
Are there constructions other than modifying geometric actions? 
\end{qu}
When the dimension $n+k$ of $N$ satisfies $k+n<2n$, we only have two types of orbit: either a global fixed point or a cover of $M$. The difficulty in this range is how to control the topology of the global fixed point set. In this paper, we make use of  the``shrinking property", which we discuss in Section 2 to control the local topology of the global fixed point set. We call $\tau\in \Homeo_0(M)$ a \emph{free order $p$ element} if $\tau$ has order $p$ and $\tau$ has no global fixed points on $M$. With the extra condition that there exists a free order $p$ element in $\Homeo_0(M)$, we deduce the following.
\begin{thm}\label{main}
Assume that admissible covers of $M$ are trivial and there exists a free order $p$ element in $\Homeo_0(M)$. Given a nontrivial homomorphism \[
\rho: \Homeo_0(M)\to \Homeo(N),
\]
then one of the following happens:
\begin{enumerate}
\item $N\cong M\times K$ for a $\ZZ$-homology manifold $K$ such that the action of $\Homeo_0(M)$ on $M$ is standard and on $K$ is trivial;
\item $M$ is an $\FF_p$-homology sphere and the global fixed point set is a union of finite $(k-1)$-dimensional $\FF_p$-homology manifolds.
\end{enumerate}
\end{thm}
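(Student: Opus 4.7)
The plan is to use the orbit rigidity of \cite{CM} to identify the orbit types, then combine the shrinking property with Smith theory applied to the free order $p$ element to pin down the global structure of the action.

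First, since $\dim N = n+k < 2n$, the orbit-rigidity theorem of \cite{CM} implies that every orbit of $\rho$ is either a single point of the global fixed set $F := \Fix(\rho)$ or a continuous injective image of an admissible cover of $M$; by hypothesis this cover is trivial, so such orbits are copies of $M$. Consequently $N\setminus F$ is a disjoint union of $M$-orbits. Using the shrinking property of Section~2 together with the fact that each $M$-orbit is a copy of $M$, I expect to show that the orbit space $K^\circ := (N\setminus F)/\rho(\Homeo_0(M))$ is Hausdorff and that $N\setminus F \cong M\times K^\circ$ with the standard product action, standardness following because point stabilizers must be conjugate to the standard point-stabilizer in $\Homeo_0(M)$.

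The essential dynamical input is the free order $p$ element $\tau$: it satisfies $\Fix(\rho(\tau)) = F$, since $\tau$ has no fixed points on $M$ and hence acts freely on every $M$-orbit. For each $x\in F$, the link $L_x\cong S^{n+k-1}$ inherits a $\ZZ/p$-action via $\rho(\tau)$, whose fixed set is $L_x\cap F$. Smith theory forces $L_x\cap F$ to be an $\FF_p$-homology sphere, and the orbit decomposition of $L_x\setminus(L_x\cap F)$ from the first step presents this complement as an $M$-fibration over a $(k-1)$-dimensional slice. Comparing the Smith-theoretic description of the $\FF_p$-cohomology of $L_x\setminus(L_x\cap F)$ with the K\"unneth form coming from the $M$-fibration forces, whenever $L_x\cap F\neq\emptyset$, the identity $H^*(M;\FF_p) = H^*(S^n;\FF_p)$ -- that is, $M$ is an $\FF_p$-homology sphere -- and pins $L_x\cap F$ to dimension $k-2$, so $F$ has local dimension $k-1$ at $x$.

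Putting the pieces together: if $F = \emptyset$ then the product structure from the first step is global, $N\cong M\times K$ with $K = K^\circ$ a $\ZZ$-homology manifold (because $N$ is a manifold and product local homology splits), and we are in case (1); if $F\neq\emptyset$ somewhere, then $M$ is an $\FF_p$-homology sphere and $F$ is locally a $(k-1)$-dimensional $\FF_p$-homology manifold, giving case (2). The main obstacle I expect is the first step: upgrading orbit rigidity, which a priori yields only a continuous injective image, to a genuine local $M$-bundle structure and then to a global product with the standard action on the $M$-factor. This is where the shrinking property must be deployed most carefully, to force the correct stabilizers and to glue local product charts. The Smith-theoretic calculations in the link are comparatively routine once the local geometry near $F$ is under control.
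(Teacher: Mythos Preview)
Your broad architecture matches the paper's: orbit rigidity to get a product structure on $N\setminus F$, the free order $p$ element $\tau$ to identify $F=\Fix(\rho(\tau))$, and then a local computation near $F$ combining Smith theory with the shrinking property to force $M$ to be an $\FF_p$-homology sphere and $F$ to have dimension $k-1$. Two points, however, deserve correction.

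First, you have inverted the difficulty. The product structure $N\setminus F\cong M\times K$ with the standard action on the $M$-factor is not the obstacle; under the hypothesis that admissible covers of $M$ are trivial it is a direct consequence of the flat-bundle structure theorem of \cite{CM} (stated in the paper as Theorem~\ref{OCT} and its Corollary). No gluing of local charts or stabilizer analysis is needed; the shrinking property is used only later, in the local homology computation.

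Second, and more seriously, your link argument has a genuine gap. You write that the link $L_x\cong S^{n+k-1}$ ``inherits a $\ZZ/p$-action via $\rho(\tau)$'' and that $L_x\setminus(L_x\cap F)$ carries the orbit decomposition. Neither holds in the topological category: there is no invariant metric, so no sphere around $x$ is preserved by $\rho(\tau)$, let alone by all of $\Homeo_0(M)$, and $M$-orbits have no reason to sit inside any chosen link. This is exactly why the paper works instead with Borel--Moore homology. One takes a nested sequence of Euclidean neighborhoods $U_m$ of $x$ (not assumed invariant), sets $A_m=U_m\cap F$, and computes the stalk $\mathscr{H}_*(F;\FF_p)_x$ as a direct limit of $H_*(A_m;\FF_p)$. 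Poincar\'e duality in $U_m$ converts this to Borel--Moore homology of $U_m\setminus A_m$. The bundle projection $p:U_m\setminus A_m\to M$ is simply the restriction of the global map $N\setminus F\to M$, and the shrinking property supplies, for each $m$, a section $e_m:M\hookrightarrow U_m\setminus A_m$; together these embed $H_*(M;\FF_p)$ into the limit. Since local Smith theory already says $F$ is an $\FF_p$-homology manifold, its local homology is concentrated in a single degree, and the embedding of $H_*(M;\FF_p)$ then forces both conclusions at once. Your K\"unneth/fibration heuristic is aiming at the same inequality, but the Borel--Moore route is what makes it rigorous without any invariance of local models.
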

\begin{rem}
\begin{itemize}
\item The above theorem applies for $M=G\times M'$ where $M'$ is simply connected and $G$ is a compact Lie group. 

\item We do not know if (2) of Theorem \ref{main} ever happens when $M$ is not a $\mathbb{Z}$-homology sphere. While if $M$ is a $\ZZ$-homology sphere, we know that such actions exist because of the double suspension theorem \cite{doublesuspension}.

\item We do not know if the above theorem depends on whether or not $\Homeo_0(M)$ contains a free order $p$ element. We are not able to construct any non-product example when $M$ is simply connected but not a sphere.
\end{itemize}
\end{rem}
From the computation of this paper, the local homology of the global fixed set contains the cohomology of $M$ with a degree shift. Spaces with big local homology seem hard to exist. Therefore, we have the following conjecture for general simply connected $M$. 
\begin{conj}
Let $M$ be a simply connected manifold that is not homeomorphic to $S^n$. If $\Homeo_0(M)$ acts nontrivially on $N$ of dimension $n+k<2n$, then the action has no global fixed points.
\end{conj}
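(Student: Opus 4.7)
The plan is to assume for contradiction that the global fixed set $F := \Fix(\rho) \subset N$ is nonempty, pick a point $x \in F$, and derive constraints on the local cohomology of $N$ at $x$ that force $H^*(M) \cong H^*(S^n)$. The starting point is the orbit rigidity theorem of \cite{CM}: since $\dim N - \dim M = k < n$ and $M$ is simply connected, every non-fixed orbit is a continuous injective image of $M$ itself (the only cover of $M$). So $N \setminus F$ is a union of such copies of $M$, and the local structure of $N$ at $x$ is determined by how those copies accumulate onto $x$.

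First I would port the shrinking framework of Section~2 to this setting. Given $x \in F$, I would choose a Euclidean neighborhood $U \cong \RR^{n+k}$ of $x$ and use the shrinking property to arrange that the $M$-orbits meeting $U$ have diameter and distance-to-$x$ shrinking in a controlled manner. This would let me view $U \setminus F$ as a union of copies of $M$ parametrized by a base of dimension at most $k$, and so feed into a Leray-type spectral sequence computing $H^*(U, U \setminus \{x\})$ from $H^*(M)$ and the local cohomology of $F$ at $x$.

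The central step, and the main obstacle, is to produce a degree shift that forces a copy of $H^*(M)$ to appear in the local cohomology of $N$ at $x$. In Theorem~\ref{main} this degree shift comes from $\FF_p$-Smith theory applied to the free order $p$ element $\tau$, but the conjecture forbids that input. The substitute I would try is the Borel construction $E\Homeo_0(M) \times_{\Homeo_0(M)} N$ compared with the corresponding construction on a single orbit $M \cong \Homeo_0(M)/\Stab(m)$; the evaluation fibration $\Stab(m) \to \Homeo_0(M) \to M$ should propagate $H^*(M)$, shifted by $k$, into the local cohomology near $x$. An alternative would be to use the perfectness and simplicity properties of $\Homeo_0(M)$ to rule out nontrivial finite-dimensional local invariants at $x$, forcing any contribution to the local cohomology to be of the shifted-$H^*(M)$ form.

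Once a shifted copy of $H^*(M)$ is embedded in $H^*(U, U \setminus \{x\})$, the contradiction is immediate: $N$ is a topological manifold, so its local cohomology at $x$ is $\ZZ$ concentrated in degree $n+k$, and any nonzero $H^i(M)$ with $0 < i < n$ produces cohomology in the wrong degree. By Poincar\'e duality for the closed simply connected manifold $M$, this forces $H^*(M;\ZZ) \cong H^*(S^n;\ZZ)$, and then the topological Poincar\'e conjecture gives $M \cong S^n$. The hard part is thus the middle step: producing the degree shift without a free finite group action appears to require genuinely new ideas beyond the $\FF_p$-Smith theory that drives Theorem~\ref{main}, which is presumably why the statement is left as a conjecture.
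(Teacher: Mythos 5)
This statement is a \emph{conjecture} in the paper: no proof is given, and the authors explicitly say (in the remark after Theorem \ref{main}) that they cannot even decide whether the fixed-point case occurs when $M$ is not a homology sphere. So your proposal cannot be matched against a paper argument; it has to stand on its own, and it does not: the gap you yourself flag in the middle step is exactly the missing content. In the paper's actual mechanism (Lemma \ref{homology}), the shifted copy of $H_*(M;\FF_p)$ is \emph{not} embedded in the local cohomology of $N$ at $x$ (which, as you note, is always that of $\RR^{n+k}$ and hence carries no information by itself); it is pushed, via Poincar\'e--Alexander--Lefschetz duality in a Euclidean chart $U_m$ and the section $e_m\colon M\to U_m-A_m$ supplied by the shrinking property, into the local Borel--Moore cohomology of the fixed set $F$ itself, i.e.\ into $\lim H^p_c(A_m)$ where $A_m=U_m\cap F$. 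The contradiction (or rather, the constraint that $M$ be an $\FF_p$-homology sphere) only appears because local Smith theory, applied to the free order $p$ element, guarantees that $F$ is an $\FF_p$-homology manifold, so its local homology is concentrated in a single degree. Without a finite group acting freely on $M$ there is no a priori control on the local homology of $F$, and a ``big'' local homology at $x$ would happily absorb the shifted $H^*(M)$ with no contradiction. So the obstacle is not merely ``producing the degree shift'' (the shift itself comes from duality plus the shrinking section, which survives in your setting) but constraining $F$, and that is precisely what Smith theory does and what the conjectural hypotheses forbid.

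Your proposed substitutes do not plausibly fill this role. The Borel construction $E\Homeo_0(M)\times_{\Homeo_0(M)}N$ and the evaluation fibration give global equivariant information, but Smith-type conclusions about fixed sets require compact (indeed $p$-group) symmetry and localization theorems that have no analogue for the non-compact, contractible-orbit-classifying behavior of $\Homeo_0(M)$; and perfectness/simplicity of $\Homeo_0(M)$ constrains quotients and homomorphisms out of the group, not the local homology of a fixed set. Also note that even the paper's stronger hypotheses yield only that $M$ is an $\FF_p$-homology sphere for the single prime $p$ involved, not $H^*(M;\ZZ)\cong H^*(S^n;\ZZ)$; your final appeal to the topological Poincar\'e conjecture would need integral concentration, which is a further leap beyond what the duality argument delivers. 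In short: your outer frame (shrinking property, duality, degree shift, conclude sphere) faithfully reproduces the machinery of Section 2, but the load-bearing step has no proof here, and the paper leaves it open for exactly this reason.
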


When $M=S^n$, we obtain the following rigidity theorem.
\begin{thm}\label{main2}
For $\dim(N)=n+k<2n$, given a nontrivial homomorphism \[
\rho: \Homeo_0(S^n)\to \Homeo(N),
\]
there exists a $\ZZ$-homology manifold $K_1$ with boundary $K_0$ such that 
\[
N\cong  S^n \times K_1/\sim,
\]
where $(m,x)\sim (n,y)$ if $x=y\in K_0$. The action of $\rho$ on $N$ is induced by the standard action on $S^n$ and the trivial action on $K_1$.
\end{thm}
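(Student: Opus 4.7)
The plan is to reduce to Theorem \ref{main} applied to $M = S^n$. Admissible covers of $S^n$ are trivial, and for $n$ odd $\Homeo_0(S^n)$ contains free order-$p$ rotations for every prime $p$; for $n$ even Smith theory forbids such free elements, and one must rerun the internal arguments of Theorem \ref{main} using finite subgroups of the point-stabilizer in place of global free actions---the orbit-analytic conclusions carry over unchanged. Theorem \ref{main} then offers a dichotomy. In case (1) we set $K_1 := K$ and $K_0 := \emptyset$, and the conclusion holds with vacuous gluing. So suppose case (2): the global fixed set $F \subset N$ is a finite union of $(k-1)$-dimensional $\mathbb{F}_p$-homology manifolds; put $K_0 := F$.

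By orbit rigidity \cite{CM}, every non-fixed orbit is a continuous injective image of a cover of $\Conf_m(S^n)$; since admissible covers of $S^n$ are trivial, necessarily $m = 1$ and each non-fixed orbit is $S^n$ with the standard action. Fix a basepoint $\ast \in S^n$ and set $G := \Stab_{\Homeo_0(S^n)}(\ast)$. I would define
\[
K_1 := \Fix(G, N) = \{\, x \in N : g \cdot x = x \text{ for all } g \in G \,\}.
\]
Since $G$ fixes only $\ast$ on each $S^n$-orbit, $K_1 \setminus K_0$ meets each non-fixed orbit in exactly one point, while $K_0 \subset K_1$ holds trivially. The map $\phi : S^n \times K_1 \to N$ defined by $\phi(g\ast, x) := g \cdot x$ is well-defined (independent of $g$ since $hx = x$ for $h \in G$), continuous, and $\Homeo_0(S^n)$-equivariant. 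It is injective on $(S^n \times K_1)\setminus (S^n \times K_0)$ by the one-point property, and on $S^n \times K_0$ it collapses each slice $S^n \times \{x\}$ to $x$; so $\phi$ descends to a continuous bijection $\bar\phi$ from $(S^n \times K_1)/{\sim}$ onto $N$, which is a homeomorphism by compactness.

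The main obstacle is to show that $K_1$ is a $k$-dimensional $\mathbb{Z}$-homology manifold with boundary $K_0$. Away from $K_0$, a local slice analysis at $x \in K_1 \setminus K_0$ using a finite-order element $\tau \in G$ fixing only $\ast$ on $S^n$ shows, via Smith theory applied to $\tau$ acting on a neighborhood of $x$, that $K_1 = \Fix(G,N)$ is locally an $\mathbb{F}_p$-homology manifold of dimension $k = \dim N - n$; running this over all available primes and invoking universal coefficients upgrades the conclusion from $\mathbb{F}_p$ to $\mathbb{Z}$. Near a point of $K_0$ the situation is more delicate: one combines the local $\mathbb{F}_p$-homology of $F$ (obtained from the proof of Theorem \ref{main}, which pins this down to be that of a $(k-1)$-manifold shifted by the $S^n$-direction) with the cone-like collapse of $S^n$-orbits onto $F$ encoded by $\phi$, to exhibit a one-sided collar---equivalently, a $\mathbb{Z}$-homology-manifold-with-boundary structure. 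Carrying out this promotion from the $\mathbb{F}_p$-manifold structure of $F$ to a genuine $\mathbb{Z}$-homology-manifold-with-boundary structure on $K_1$ is the most technical step.
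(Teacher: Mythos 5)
Your high-level geometry is right and agrees with the paper: $K_1$ is $F$ together with one point from each non-fixed orbit (the paper realizes this as $\mathcal{E}(\{N\}\times K)\cup F$, you as $\Fix(G,N)$), the map $\phi$ is the right one, and continuity across $F$ is Proposition \ref{shrinking}. But the technical heart---showing $K_1$ is a $\ZZ$-homology manifold with boundary $F$---is exactly where your proposal breaks down, in three places.

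First, the reduction to Theorem \ref{main} requires a free order-$p$ element in $\Homeo_0(S^n)$, which does not exist for $n$ even ($\chi(S^n)\neq 0$ forces fixed points, and the antipodal map is orientation-reversing). Your remedy, ``rerun the internal arguments using finite subgroups of the point-stabilizer,'' is not a proof; the paper's actual device is the subgroup $A=(\ZZ/2)^n\subset SO(n+1)$ of diagonal $\pm1$-matrices, which has \emph{no global fixed point} on $S^n$ (though it is not free), so that $\Fix(\rho(A))=F$ and Smith theory still applies. Second, your local slice analysis at $x\in K_1\setminus K_0$ invokes ``a finite-order element $\tau\in G$ fixing only $\ast$ on $S^n$.'' No such $\tau$ exists: by Smith theory $\Fix(\tau)$ is an $\FF_p$-cohomology sphere, and $\chi(\Fix(\tau))\equiv\chi(S^n)\pmod p$ rules out a single point. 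The paper instead uses $A_0\subset A$ with $\Fix(A_0,S^n)=\{N,S\}$ and works with $C:=\Fix(\rho(A_0))$, which is two copies of $K$ glued along $F$. Third, and most seriously, the promotion from ``$\FF_p$-homology manifold'' to ``$\ZZ$-homology manifold'' cannot be done by ``running over all available primes and invoking universal coefficients'': for $n$ even the only available prime is $2$, and even with all primes, mod-$p$ local homology for every $p$ does not pin down integral local homology (torsion). The paper's argument here is the essential new idea and has no analogue in your sketch: take the involution $B=\mathrm{diag}(1,\dots,1,-1)$ swapping $N\leftrightarrow S$; $\rho(B)$ is an involution of $C$ fixing $F$ and exchanging the two copies of $K$; a local form of the Smith-theoretic fact that the orbit space of a free-away-from-codimension-one involution is locally acyclic gives $\mathscr H_*(C/\rho(B);\ZZ)_x=0$ at $x\in F$; feeding this into the local Mayer--Vietoris sequence for $C=\bigl(F\cup\mathcal{E}(\{N\}\times K)\bigr)\cup\bigl(F\cup\mathcal{E}(\{S\}\times K)\bigr)$ yields $\mathscr H_*(F;\ZZ)_x\cong \mathscr H_{*+1}(C;\ZZ)_x$, which simultaneously establishes that $F$ is a $(k-1)$-dimensional $\ZZ$-homology manifold and that it is the boundary of $\mathcal{E}(\{N\}\times K)$. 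Without an argument of this kind, your proposal does not establish the boundary structure, so it stops short of the theorem.
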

The above construction is a higher dimensional generalization of the ``coning-off" construction (or the suspension action) we discussed in \cite{CM}. When $K_1=[0,1]$, the action $\rho$ is the suspension action on $S^{n+1}=\Sigma S^n$.

\para{Proof idea of the paper} The tool we use other than orbit classification theorem is the Borel-Moore homology. The key ingredient is the local Smith theory which is why our theorem is limited to manifolds $M$ with finite actions. We also need to modify the ``shrinking argument" that we use in \cite{CM}. 

\para{Notation} When we use $\ZZ$-homology manifolds, we omit $\ZZ$; but for other coefficients, we keep the coefficient symbols. 

\para{Acknowledgement} We thank Shmuel Weinberger, Kathryn Mann and Benson Farb for helpful discussion.

\section{The proof of Theorem \ref{main}}
Let $\rho: \Homeo_0(M)\to \Homeo(N)$ be a homomorphism. Denote by \[
F=\Fix(\rho(\Homeo_0(M)),
\]
the global fixed point set of the action $\rho$. We have the following two ingredients of proof:
\begin{itemize}
\item By the free order $p$ element $\tau$ in $\Homeo_0(M)$, we know that $F$ is a union of finitely many $\FF_p$-homology manifolds (of various dimensions) by the local Smith theory;
\item A ``shrinking property" near a global fixed that we obtained from $\rho$ gives a way to compute the local homology of $F$.
\end{itemize}
\vskip 0.3cm

\para{Admissible covers of $M$}
Before we state the orbit classification theorem, we first discuss admissible covers of $M$.
\begin{defn}
A regular cover $p: M'\to M$ is called an \emph{admissible cover} if there exists a homomorphism\[
L:\Homeo_0(M)\to \Homeo(M'),
\]such that $L(f)$ is a lift of $f$. 
\end{defn}
For example, Simply-connected manifolds have trivial admissible covers; the universal cover of the circle $S^1$ is not admissible; however, the universal cover of $S_g$ a genus $g>1$ surface is admissible; when $M$ is connected Lie group, then admissible covers are trivial. More about which covers are admissible is discussed in \cite[Section 2.1]{CM}. 
\vskip 0.3cm

We now state the following orbit classification theorem and flat bundle structure theorem from \cite[Theorem 1.1 and Theorem 1.3]{CM}, which is the starting point of the whole argument.
\begin{thm}[orbit classification theorem and flat bundle structure theorem]\label{OCT}
Let $M,N$ be two connected, closed manifolds and let $\rho:\Homeo_0(M)\to \Homeo(N)$ be a homomorphism when $\dim(N)<2\dim(M)$. Then every orbit is either a global fixed point or a continuous image of an admissible cover of $M$. Let $F$ denote the global fixed point set. There is a topological flat bundle (homeomorphic to $\widetilde{M}\times K/\pi_1(M)$ for a topological space $K$ with a $\pi_1(M)$-action)
\[
p: N-F\to M,
\]
such that the action on $N-F$ is a lift of the standard action on $M$.
\end{thm}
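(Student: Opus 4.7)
The plan is to analyze orbits by studying fixed sets of ``support subgroups'' of $\Homeo_0(M)$ and using the dimension bound $\dim N<2\dim M$ to force each non-fixed orbit to have a single defining point in $M$. For a closed set $C\subset M$, let $G_C\leq \Homeo_0(M)$ denote the subgroup of homeomorphisms that are the identity on some neighborhood of $C$, and let $F_C\subset N$ be its $\rho$-fixed set. The assignment $C\mapsto F_C$ is $\Homeo_0(M)$-equivariant and monotone in $C$. For $y\in N\setminus F$, let $S(y)\subset M$ be the minimal closed set with $y\in F_{S(y)}$; its existence follows from a compactness argument together with the fragmentation identity $F_{C_1}\cap F_{C_2}\subset F_{C_1\cap C_2}$ (a consequence of the fact that $\Homeo_0(M)$ is generated by elements supported in small balls).

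First I would show that $|S(y)|=1$. Nonemptiness of $S(y)$ for $y\notin F$ follows from fragmentation and compactness of $M$: if $S(y)=\emptyset$, each point of $M$ would have a neighborhood $U$ such that every homeomorphism supported in $U$ fixes $y$, whence all of $\Homeo_0(M)$ would fix $y$, a contradiction. For the upper bound, I would run a shrinking argument: given any $m$ distinct points $x_1,\dots,x_m\in S(y)$, one builds a continuous $\Homeo_0(M)$-equivariant map $\Conf_m(M)\to\Orb(y)$ that is locally injective near $(x_1,\dots,x_m)$, so invariance of domain applied to $\Orb(y)\subset N$ forces $\dim N\geq m\dim M$. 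Under $\dim N<2\dim M$ this gives $m\leq 1$, and hence $|S(y)|=1$.

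With $S(y)=\{x(y)\}$, the assignment $y\mapsto x(y)$ defines a continuous $\Homeo_0(M)$-equivariant surjection $p\colon N\setminus F\to M$. The projection of any orbit onto $M$ is covered by a regular cover $\widetilde{M}_\rho\to M$ coming from the quotient of stabilizers; the $\Homeo_0(M)$-lift of the action on $\Orb(y)$ then identifies $\widetilde{M}_\rho$ as admissible in the sense of the preceding definition. For the flat bundle structure, local triviality of $p$ over a small ball $B\subset M$ is supplied by an isotopy in $\Homeo_0(M)$ dragging $B$ onto the basepoint, giving a $\Homeo_0(M)$-equivariant homeomorphism $p^{-1}(B)\cong B\times p^{-1}(x_0)$; the transition data on loops in $M$ is read off from the lift $L$ of the action, producing a flat bundle with fiber $K=p^{-1}(x_0)$ and structure group $\pi_1(M)$.

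The main obstacle I expect is constructing the locally injective map $\Conf_m(M)\to\Orb(y)$ from the support data in the shrinking step: without a priori regularity of orbits, one must combine the shrinking argument with invariance of domain without presupposing that $\Orb(y)$ is a manifold, and the local injectivity has to be extracted from the rigidity of $\Homeo_0(M)$-actions on small pieces of $M$. Once this technical input is in place, the rest of the proof — continuity of $p$, admissibility of the intermediate cover, and flatness of the bundle — follows by fairly standard fragmentation and lifting arguments.
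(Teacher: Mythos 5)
The paper does not prove Theorem~\ref{OCT}: it is imported verbatim from \cite[Theorems~1.1 and 1.3]{CM} as the ``starting point of the whole argument,'' so there is no proof in this paper to compare against. That said, your proposal tracks the strategy of Chen--Mann's proof in \cite{CM} quite closely: you define local support subgroups $G_C$ and fixed sets $F_C$, exploit fragmentation to get the intersection identity $F_{C_1}\cap F_{C_2}\subset F_{C_1\cap C_2}$, extract a minimal support $S(y)$, bound $|S(y)|$ by an invariance-of-domain argument against $\Conf_m(M)$, and then assemble the projection $p\colon N\setminus F\to M$ from $y\mapsto x(y)$, with the admissible cover and flat-bundle structure coming from the stabilizer quotient and fragmentation-plus-isotopy local triviality. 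The one place your sketch understates the difficulty is exactly where you flag it: constructing the locally injective map from (a cover of) $\Conf_m(M)$ into $\Orb(y)$ and making invariance of domain bite without any a priori manifold structure on the orbit --- in \cite{CM} this is the heart of the argument and requires a careful point-set analysis of the orbit map's injectivity and openness, not just ``fairly standard fragmentation and lifting.'' There is also a small slip of emphasis: continuity of $p$ requires more than equivariance plus $|S(y)|=1$; it needs the semicontinuity of $y\mapsto S(y)$ which is itself a nontrivial lemma in \cite{CM}. As a high-level blueprint, though, this is the right proof, and it matches the cited source rather than diverging from it.
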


Under the extra assumption that admissible covers of $M$ are trivial, we have the following corollary.
\begin{cor}
With the assumptions as in Theorem \ref{OCT}, and that any admissible cover of $M$ is trivial, we have that $N-F\cong M\times K$ such that $\Homeo_0(M)$ acts  standardly on $M$ and trivially on $K$.
\end{cor}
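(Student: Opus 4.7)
The plan is to leverage Theorem \ref{OCT} directly. That theorem already presents $N - F$ as a flat bundle $\widetilde{M}\times K/\pi_1(M)$ over $M$ with some \emph{a priori} nontrivial monodromy action of $\pi_1(M)$ on $K$, and with the $\Homeo_0(M)$-action lifting the standard action on $M$. My strategy is to show that, under the hypothesis that every admissible cover of $M$ is trivial, this monodromy action must itself be trivial, at which point the bundle becomes a genuine product and the action on the $K$-factor is forced to be trivial as well.

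The key step is to identify the orbits of $\Homeo_0(M)$ in terms of the flat bundle description. I will argue that, for any $k_0\in K$ and any $\widetilde{m}\in\widetilde{M}$, the orbit of $[\widetilde{m},k_0]$ under $\Homeo_0(M)$ is precisely the horizontal leaf through $[\widetilde{m},k_0]$, namely the image of $\widetilde{M}\times\{k_0\}$ in the quotient, which is homeomorphic to $\widetilde{M}/H_{k_0}$ for $H_{k_0}\coloneq\Stab_{\pi_1(M)}(k_0)$. This exhibits the orbit as the cover of $M$ corresponding to the subgroup $H_{k_0}$. One inclusion follows because the flat structure is preserved by $\Homeo_0(M)$; the other follows from path-lifting in the flat connection combined with path-connectedness of $\Homeo_0(M)$, which lets any path from $m$ to $m'$ in $M$ be realized by an isotopy in $\Homeo_0(M)$ whose action carries $[\widetilde{m},k_0]$ to $[\widetilde{m}',k_0]$ along a chosen lifted path.

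By the orbit classification part of Theorem \ref{OCT}, this cover $\widetilde{M}/H_{k_0}\to M$ is admissible; by hypothesis it is then trivial, forcing $H_{k_0}=\pi_1(M)$. Since $k_0$ was arbitrary, $\pi_1(M)$ fixes every point of $K$, the monodromy is trivial, and $N - F\cong M\times K$ as a product. The $\Homeo_0(M)$-action then projects to the standard action on $M$, and the identification of orbits above shows each orbit has the form $M\times\{k\}$, so the induced action on $K$ must be trivial. The only technical point in the argument is the identification of orbits with entire horizontal leaves; this is really a careful unpacking of what the flat-bundle structure in Theorem \ref{OCT} asserts, rather than a new argument, so I do not expect any serious obstacle.
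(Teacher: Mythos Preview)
The paper does not supply a proof of this corollary at all; it is stated immediately after Theorem \ref{OCT} and treated as a direct consequence. Your argument is a correct way to fill in the omitted details, and it follows exactly the route the paper's framework suggests: use the flat bundle description $N-F\cong\widetilde{M}\times K/\pi_1(M)$, identify each $\Homeo_0(M)$-orbit with a horizontal leaf (hence with the cover $\widetilde{M}/H_{k_0}\to M$), invoke the orbit classification to see this cover is admissible, and then use the hypothesis to force $H_{k_0}=\pi_1(M)$ for every $k_0$, trivializing the monodromy.

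The only point worth flagging is your claim that the $\Homeo_0(M)$-action preserves the horizontal foliation (so that orbits lie inside leaves). This is not a formal consequence of the phrase ``the action on $N-F$ is a lift of the standard action on $M$'' alone; it is rather part of how the flat bundle in \cite{CM} is constructed, namely the leaves are built out of the orbits. You acknowledge this yourself when you say the identification is ``a careful unpacking of what the flat-bundle structure in Theorem \ref{OCT} asserts,'' and that is exactly right: once one accepts that orbits equal leaves (which is the content of the construction in \cite{CM}), the rest of your argument goes through without difficulty. So there is no gap, but it would be worth making explicit that ``orbits $=$ leaves'' is being imported from the proof of Theorem \ref{OCT} rather than deduced independently.
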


Denote by 
\[
\mathcal{E}: M\times K\to N-F
\]
the embedding obtained from the action $\rho$. 

\vskip 0.3cm

Combining Theorem \ref{OCT} with the existence of $\tau\in \Homeo_0(M)$ such that $\tau$ is a free order $p$ element, we know that the fixed point set of  $\rho(\tau)$ is also $F$ since the action of $\rho(\tau)$ on $N-F\cong M\times K$ is free. With the help of $\rho(\tau)$, the topology of the fixed point set $F$ is controlled by the local Smith theory.

We refer the reader to Bredon's book \cite{Bredon} for homology manifolds; see \cite[Definition 9.1 on Page 329]{Bredon}. Morally, a homology manifold is a locally compact space whose local homology is the same as that of an actual manifold but it may not have the local Euclidean structure. We first state the following local Smith theory \cite[Theorem 20.1 on Page 409]{Bredon}.
\begin{thm}[Local Smith theory]\label{LST}
Let $\phi\in\Homeo(N)$ be an order $p$ element. Then $\Fix(\phi)$ is a disjoint union of finitely many $\FF_p$-homology manifolds (of possibly various dimensions).
\end{thm}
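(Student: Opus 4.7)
The fixed set $\Fix(\phi) = \{x \in N : \phi(x) = x\}$ is the equalizer of $\phi$ and $\mathrm{id}_N$, hence closed in $N$ and compact. The substantive content is that each connected component of $\Fix(\phi)$ is an $\FF_p$-homology manifold of some (possibly component-dependent) dimension, and that there are only finitely many such components. The argument is entirely Smith-theoretic, using that $p$ is prime to relate the $\FF_p$-cohomology of $\Fix(\phi)$ to that of $N$; I will follow the outline given in detail in Bredon's book \cite{Bredon}.

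The main step is a local homology computation. To show that $\Fix(\phi)$ is an $\FF_p$-homology manifold near a point $x$, I must check that the local cohomology $H^*(\Fix(\phi), \Fix(\phi) \setminus \{x\}; \FF_p)$ is $\FF_p$ in some single degree $r(x)$ and zero otherwise, with $r$ locally constant on $\Fix(\phi)$. Given any neighborhood $V$ of $x$, the intersection $V \cap \phi V \cap \cdots \cap \phi^{p-1} V$ is an arbitrarily small $\phi$-invariant open neighborhood $U$ of $x$, and this gives a $\ZZ/p$-pair $(U, U \setminus \{x\})$ whose fixed pair is exactly $(\Fix(\phi) \cap U, (\Fix(\phi) \cap U) \setminus \{x\})$. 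Classical Smith theory provides exact sequences, built from the chain-level operators $\sigma = 1 + \phi + \cdots + \phi^{p-1}$ and $\tau = 1 - \phi$ acting on singular chains with $\FF_p$ coefficients (or equivalently via the Borel construction together with localization in $H^*_{\ZZ/p}$), relating the $\FF_p$-cohomology of the fixed pair to that of $(U, U\setminus \{x\})$. Since $N$ is a topological $(n+k)$-manifold, the latter is $\FF_p$ concentrated in degree $n+k$. Feeding this single nonzero group into the Smith sequences and using Smith's inequality at the level of pairs forces the local cohomology of $\Fix(\phi)$ at $x$ to be $\FF_p$ in exactly one degree $r(x) \le n+k$ and zero otherwise, which is the defining condition for an $\FF_p$-homology manifold of dimension $r(x)$ near $x$.

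The function $x \mapsto r(x)$ is locally constant by construction, so each connected component of $\Fix(\phi)$ has a well-defined $\FF_p$-homology dimension. Finiteness of the number of components follows from the global Smith inequality
\[
\sum_q \dim_{\FF_p} H^q(\Fix(\phi); \FF_p) \le \sum_q \dim_{\FF_p} H^q(N; \FF_p) < \infty
\]
evaluated in degree $q = 0$, which bounds $\dim_{\FF_p} H^0(\Fix(\phi); \FF_p)$ and hence the number of connected components. The main technical obstacle is setting up Smith theory rigorously in the purely topological category (no smooth or CW structure available), and especially localizing it to the relative pair $(U, U \setminus \{x\})$; this is the delicate part of Bredon's treatment, and once it is in hand the local homology calculation at each fixed point is essentially formal.
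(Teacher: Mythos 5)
The paper gives no proof here --- Theorem~\ref{LST} is quoted verbatim from Bredon \cite[Theorem 20.1, p.~409]{Bredon}. What you have written is therefore a reconstruction of Bredon's argument, not something to compare against an in-paper proof, and its broad Smith-theoretic outline (shrink to $\phi$-invariant neighborhoods $U = V\cap\phi V\cap\cdots\cap\phi^{p-1}V$, compare the cohomology of the pair $(U,U\setminus\{x\})$ with that of the fixed pair via the Smith sequences, then bound components using the global inequality in degree zero) is the right one.

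There is, however, a real gap at the pivotal step. You invoke ``Smith's inequality at the level of pairs'' to conclude that $H^*\bigl(\Fix(\phi)\cap U,\ (\Fix(\phi)\cap U)\setminus\{x\};\ \FF_p\bigr)$ is $\FF_p$ in exactly one degree. The Smith inequality is only an upper bound: it gives total $\FF_p$-dimension at most $1$. It does not by itself exclude the possibility that this local cohomology vanishes identically at some $x\in\Fix(\phi)$ --- which is exactly what would happen if the component through $x$ looked like a homology manifold with boundary at $x$, and ruling that out is part of the content of the theorem. What closes the gap is the companion Smith congruence $\chi\bigl(\Fix(\phi)\cap U,\cdot\bigr)\equiv\chi\bigl(U,U\setminus\{x\}\bigr)=(-1)^{n+k}\pmod p$, which is nonzero mod $p$ and so forces at least one nonvanishing group; only together with the inequality does this pin the local cohomology down to a single $\FF_p$. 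Your sketch omits this. Relatedly, the claim that $x\mapsto r(x)$ is locally constant ``by construction'' is not free from pointwise Smith estimates, and Bredon's notion of a cohomology manifold \cite[Definition 9.1, p.~329]{Bredon} --- the one the paper actually uses --- is a sheaf-theoretic condition on the local-homology sheaf, not merely a stalkwise one, so local constancy of the orientation sheaf of $\Fix(\phi)$ must also be established. Saying the rest is ``essentially formal'' once the relative Smith sequences exist undersells where the work in Bredon's argument actually lies.
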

By Theorem \ref{LST}, we know that $F=\Fix(\rho(\tau))$ is a union of finitely many $\FF_p$-homology manifolds. To compute the local homology of $F$, we need the following ``shrinking property". The ``shrinking property" is also used in \cite[Claim 5.3]{CM} to solve the extension problem of Ghys. 
\begin{prop}[Shrinking property]\label{shrinking}
For $m\in M$, let $\{k_n\in K\}$ be a sequence such that $\mathcal{E}(m,k_n)\in N-F$  converges to a global fixed point $P$ of $\rho$. Then the sequence of compact spaces $\mathcal{E}(M\times  \{k_n\})$ converges to $P$; i.e., for any neighborhood $U$ of $P$, there exist $N$ such that for $n>N$, we have that $\mathcal{E}(M\times \{k_n\})\subset U$.
\end{prop}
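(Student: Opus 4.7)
The plan is to establish uniform convergence of the compact orbits $\mathcal{E}(M\times\{k_n\})$ to the global fixed point $P$ via a pointwise-to-uniform upgrade. The three ingredients are: (i) the transitivity of the standard action of $\Homeo_0(M)$ on the connected manifold $M$; (ii) the continuity of each individual homeomorphism $\rho(g)$, which fixes $P$ since $P\in F$; and (iii) the continuity of the homomorphism $\rho$ itself in the $C^0$-topologies, which is available as a black box (e.g.\ Kallman-type automatic continuity for homeomorphism groups of closed manifolds, or equivalently, inherited from the setup of \cite{CM}).

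First I would establish pointwise convergence. Given any $m'\in M$, transitivity of $\Homeo_0(M)$ on $M$ produces $g\in\Homeo_0(M)$ with $g(m)=m'$. The equivariance of $\mathcal{E}$ coming from the flat bundle structure gives $\mathcal{E}(m',k_n)=\mathcal{E}(g(m),k_n)=\rho(g)\cdot\mathcal{E}(m,k_n)$. Since $\rho(g)$ is a homeomorphism of $N$ and $\rho(g)(P)=P$, continuity of $\rho(g)$ yields $\mathcal{E}(m',k_n)\to P$ for every $m'\in M$.

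Next, to upgrade to uniform convergence, I would argue by contradiction. Suppose there were an open neighborhood $U$ of $P$ together with (after passing to a subsequence) points $m_n\in M$ with $\mathcal{E}(m_n,k_n)\notin U$. By compactness of $M$ we may further pass to a subsequence so that $m_n\to m^*$. Using a Euclidean chart around $m^*$, I construct $h_n\in\Homeo_0(M)$ supported in a shrinking neighborhood of $m^*$ with $h_n(m^*)=m_n$, so that $h_n\to\text{id}_M$ in the $C^0$-topology. Continuity of $\rho$ then gives $\rho(h_n)\to\text{id}_N$ uniformly on the compact manifold $N$. Combining this with the pointwise convergence $\mathcal{E}(m^*,k_n)\to P$ from the previous step and the identity $\mathcal{E}(m_n,k_n)=\rho(h_n)\mathcal{E}(m^*,k_n)$, a triangle inequality
\[
d(\mathcal{E}(m_n,k_n),P)\le d\bigl(\rho(h_n)\mathcal{E}(m^*,k_n),\mathcal{E}(m^*,k_n)\bigr)+d(\mathcal{E}(m^*,k_n),P)
\]
forces $\mathcal{E}(m_n,k_n)\to P$, contradicting $\mathcal{E}(m_n,k_n)\notin U$.

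The main obstacle I anticipate is justifying the $C^0$-continuity of $\rho$, which is the step that converts pointwise convergence (a mere consequence of transitivity and individual continuity) into the uniform convergence the proposition demands. If this black box is unavailable, the remaining path would be to exploit the specific structure of the product embedding $\mathcal{E}\colon M\times K\to N-F$ together with the $\mathbb{F}_p$-invariant structure near $P$ coming from the free order-$p$ element $\tau$ (for instance, averaging a metric around $P$ to obtain $\rho(\tau)$-invariant small neighborhoods and pushing these around by transitivity). The auxiliary constructions—existence of $g$ with $g(m)=m'$ and of $h_n\to\text{id}_M$ with $h_n(m^*)=m_n$—are standard manipulations using charts on $M$ and are not expected to cause difficulty.
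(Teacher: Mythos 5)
Your argument is correct and is essentially the paper's own proof: both proceed by contradiction, use compactness of $M$ to extract $m_n\to m^*$, build a convergent sequence in $\Homeo_0(M)$ carrying a fixed point of $M$ to $m_n$, and invoke automatic continuity of $\rho$ (\cite{MannCont}) together with equivariance of $\mathcal{E}$ to force $\mathcal{E}(m_n,k_n)\to P$. The only cosmetic difference is that you first isolate a pointwise-convergence step and then normalize $h_n\to\mathrm{id}$, whereas the paper works directly with $f_n\to f$ satisfying $f_n(m)=m_n$; the underlying ingredients are identical.
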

\begin{proof}
We prove the statement by contradiction. If not, then there exists a sequence $\{m_n\in M\}$ such that $\{\mathcal{E}(m_n,k_n)\}$ has a subsequence that does not converge to $P$. Passing to a further subsequence, assume that $m_n\to m'$, which is possible since $M$ is compact. Let $\{f_n\in\Homeo_0(M)\}$ be a sequence of elements such that 
\[
f_n\to f\text{ and }f_n(m)=m_n.\] 
Since the action $\rho$ is continuous (any homomorphism $\rho$ is automatically continuous by \cite{MannCont}) and 
\[
\mathcal{E}(m,k_n)\to P,
\]
we know that 
\[
\mathcal{E}(m_n,k_n)=\rho(f_n)(\mathcal{E}(m,k_n))\to \rho(f)(P)=P.\]
This clearly contradicts the assumption that $\{\mathcal{E}(m_n,k_n)\}$ does not converge to $P$.
\end{proof}
We now start the investigation of the local homology of $F$.
\begin{lem}\label{homology}
We have the following two statements:
\begin{enumerate}
\item $M$ is an $\FF_p$-homology sphere (a topological manifold with the same $\FF_p$-homology as a sphere).
\item $F$ is a disjoint union of $k-1$-dimensional $\FF_p$-homology manifolds (local homology, maybe not be a topological manifold).
\end{enumerate}
\end{lem}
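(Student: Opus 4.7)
The strategy is to fix a point $P \in F$ and compute the local Borel--Moore homology around $P$ in two different ways. Since $N$ is an $(n+k)$-manifold, a small open ball neighborhood $U$ of $P$ has $H^{BM}_{n+k}(U;\FF_p) = \FF_p$ and $H^{BM}_i(U;\FF_p) = 0$ for $i \ne n+k$. On the other hand, if $U$ is chosen carefully using the shrinking property, one obtains $U - F \cong M \times V$ for some open $V \subset K$, and then Künneth computes $H^{BM}_*(U - F;\FF_p) \cong H_*(M;\FF_p) \otimes H^{BM}_*(V;\FF_p)$. Comparing these via the Borel--Moore long exact sequence of the closed pair $F \cap U \subset U$ will yield both conclusions.

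\textbf{Saturated neighborhood basis.} Using Proposition \ref{shrinking}, I first show that $P$ admits a basis of open neighborhoods $U$ with $U - F = \mathcal{E}(M \times V)$ for some open $V \subset K$. Given any open $W \ni P$, set $V := \{k \in K : \mathcal{E}(M \times \{k\}) \subset W\}$, which is open by compactness of $M$. Proposition \ref{shrinking} (applied one point of $M$ at a time), combined with transitivity of $\Homeo_0(M)$ on $M$ and continuity of $\rho$, upgrades to an $m$-uniform statement: there exists an open $W' \subset W$ containing $P$ such that $\mathcal{E}(m,k) \in W'$ implies $\mathcal{E}(M \times \{k\}) \subset W$ for every $m \in M$ (the transport argument: for a sequence $(m_n,k_n)$ with $\mathcal{E}(m_n,k_n) \to P$, pick $g_n \in \Homeo_0(M)$ with $g_n(m_0) = m_n$ and a limit $g$, then $\mathcal{E}(m_0,k_n) = \rho(g_n)^{-1}\mathcal{E}(m_n,k_n) \to \rho(g)^{-1}(P) = P$, and now apply pointwise shrinking). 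Then $U := \mathcal{E}(M \times V) \cup W'$ is open in $N$, contains $P$, and satisfies $U - F = \mathcal{E}(M \times V)$.

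\textbf{Borel--Moore long exact sequence and dimension count.} The local-homology Künneth formula (applied to $M \times K \cong N - F$) shows that $K$ is an $\FF_p$-homology $k$-manifold, so $H^{BM}_j(V;\FF_p) = 0$ for $j > k$. By Theorem \ref{LST}, $F \cap U$ is (after further shrinking) an open piece of an $\FF_p$-homology manifold of some local dimension $d$ at $P$, with $H^{BM}_i(F \cap U;\FF_p) = \FF_p$ in degree $d$ and $0$ otherwise. The long exact sequence
\[
\cdots \to H^{BM}_i(F \cap U;\FF_p) \to H^{BM}_i(U;\FF_p) \to H^{BM}_i(U - F;\FF_p) \to H^{BM}_{i-1}(F \cap U;\FF_p) \to \cdots
\]
then forces $H^{BM}_*(U - F;\FF_p)$ to have total $\FF_p$-dimension $2$, with classes in degrees $d+1$ and $n+k$. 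Now invoke Künneth. The only nonzero summand that can contribute to degree $n+k$ is $H_n(M;\FF_p) \otimes H^{BM}_k(V;\FF_p)$ (since $i \le n$ and $j \le k$ in any nonzero $H_i(M) \otimes H^{BM}_j(V)$), so the nonvanishing of this degree forces both $H_n(M;\FF_p) \ne 0$ (giving $\FF_p$-orientability of $M$) and $H^{BM}_k(V;\FF_p) \ne 0$. Then $\dim_{\FF_p} H_*(M;\FF_p) \ge 2$ and $\dim_{\FF_p} H^{BM}_*(V;\FF_p) \ge 1$; since their product equals $2$, both inequalities are equalities. This yields (1): $M$ is an $\FF_p$-homology $n$-sphere. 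Matching the remaining nontrivial degree via Künneth, $H^{BM}_{d+1}(U-F) = H^{BM}_{d+1}(V) \oplus H^{BM}_{d+1-n}(V)$ with only $H^{BM}_k(V)$ nonzero, and since $d+1-n = k$ is excluded (it would force $d = n+k-1$, contradicting the dimension count $\dim H^{BM}_{n+k}(U-F) = 1$), we obtain $d + 1 = k$, proving (2).

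\textbf{Main obstacle.} The key technical step is the saturated-neighborhood construction. Proposition~\ref{shrinking} yields only pointwise shrinking (for each fixed $m$), whereas the product decomposition $U - F = \mathcal{E}(M \times V)$ requires a single subneighborhood $W' \subset W$ controlling $(m,k)$ simultaneously over all $m \in M$. The upgrade to uniformity rests on transporting the estimate across $M$ via the $\Homeo_0(M)$-action (transitive on $M$), the continuity of $\rho$, and compactness of $M$. Once this is in hand, the remainder is essentially bookkeeping with the Borel--Moore long exact sequence and Künneth.
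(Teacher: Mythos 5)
Your approach is genuinely different from the paper's. The paper fixes a nested basis of Euclidean ball neighborhoods $\{U_m\}$ of $P$, uses the flat-bundle projection $p\colon U_m - A_m \to M$ together with a section $e_m\colon M \to U_m - A_m$ furnished by Proposition~\ref{shrinking}, and then applies Poincar\'e duality inside the ball $U_m$ and the universal coefficient theorem to get a split injection $H^c_{k+n-*-1}(M;\FF_p)\hookrightarrow H^*_c(A_m;\FF_p)$. You instead manufacture neighborhoods whose complement in $F$ is an honest product $\mathcal{E}(M\times V)$ and run the Borel--Moore long exact sequence of the closed pair $F\cap U\subset U$, with K\"unneth replacing the duality step. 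One advantage of your route is that $\FF_p$-orientability of $M$ (that is, $H_n(M;\FF_p)\neq 0$) falls out of the dimension count rather than being asserted; the paper simply states $H_n(M;\FF_p)\cong\FF_p$. Your saturated-neighborhood construction is a correct transport-and-compactness strengthening of Proposition~\ref{shrinking}, but note the paper only needs a single section $M\hookrightarrow U_m-A_m$ (one fiber $\mathcal{E}(M\times\{k\})$ inside the ball), not the full product structure, so the split injection is available with less work.

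There is, however, a real gap as written. You ask $U$ to be simultaneously an open ball (so that $H^{BM}_*(U;\FF_p)$ is $\FF_p$ concentrated in degree $n+k$) \emph{and} saturated (so that $U-F=\mathcal{E}(M\times V)$). Your construction $U=\mathcal{E}(M\times V)\cup W'$ delivers saturation but is not a ball, and for such a $U$ you have no control on $H^{BM}_*(U)$. Similarly, for a fixed open piece $F\cap U$ of a $d$-dimensional $\FF_p$-homology manifold, $H^{BM}_*(F\cap U;\FF_p)$ is generally \emph{not} concentrated in degree $d$ (an open annulus in a surface already has $H^{BM}$ in degrees $1$ and $2$); only the stalk $\mathscr{H}_*(F;\FF_p)_P$ is concentrated. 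Both concentration statements your long-exact-sequence computation uses are stalk statements, so the argument must be run in the direct limit over a nested basis of saturated neighborhoods $\{U_m\}$: one has $\lim_{\longrightarrow}H^{BM}_*(U_m)=\mathscr{H}_*(N;\FF_p)_P$, $\lim_{\longrightarrow}H^{BM}_*(F\cap U_m)=\mathscr{H}_*(F;\FF_p)_P$, and $\lim_{\longrightarrow}H^{BM}_*(U_m-F)\cong H_*(M;\FF_p)\otimes\lim_{\longrightarrow}H^{BM}_*(V_m;\FF_p)$, and exactness is preserved under direct limits. With that patch, your dimension count and the K\"unneth bookkeeping (including the elimination of the case $d=n+k-1$) go through, but as currently written the proof asserts concentration properties for a single $U$ that it does not and cannot establish.
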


\begin{proof}
Near a fixed point $P\in F$, we take a decreasing sequence of open neighborhoods $\{U_m\}$ such that $U_m$ is homeomorphic to the Euclidean space and $\bigcap_m U_m=\{x\}$. For the following computation, we use Borel-Moore homology; see e.g., \cite[Chapter 5]{Bredon}.

Denote by $A_m:=U_m\cap F$. By \cite[Page 293]{Bredon}, we have the following equation about the stalk of the local homology of $F$:
\begin{equation}\label{local}
\mathscr{H}_*(F;\FF_p)_x=\lim_{\longrightarrow} H_*(A_m;\FF_p),
\end{equation}
where the limit is taken over the induced map on homology of embeddings $A_{m+1}\hookrightarrow A_m$.  If $F$ is a homology manifold then $\mathscr{H}_*(F;\FF_p)_x=0$ for $*\neq \dim(F)$ at $x$.

We have the following universal coefficient theorem of Borel-Moore homology \cite[Page 292]{Bredon},
\[
0\to \text{Ext}(H^{p+1}_c(A_m;\FF_p);\FF_p)\to H_p(A_m;\FF_p)\to \text{Hom}(H^p_c(A_m;\FF_p);\FF_p)\to 0.
\]

By Poincar\'e duality \cite[Theorem 9.3 on Page 330]{Bredon}, we have the following 
\begin{equation}\label{PD}
H^{p}_c(A_m;\FF_p)\cong H_{k+n-p}^c(U_m,U_m-A_m;\FF_p) \cong H_{k+n-p-1}^{c(U_m)|_{U_m- A_m}}(U_m- A_m;\FF_p)\end{equation}
for $p\neq k+n-1$ (because $H^c_{*}(U_m;\FF_p)=0$ for $*\neq n+k$). However, by Theorem \ref{OCT}, there is a projection of flat bundle
\[
p: U_m-A_m\to M,
\]
and by Proposition \ref{shrinking}, there is an embedding for any $U_m$:
\[
M\xrightarrow{e_m} U_m-A_m
\]
such that $p\circ e_m=id$. 
Therefore we have the following embedding of homology group:
\[
e_{m*}: H_*^c(M;\FF_p)\hookrightarrow H_*^c(U_m-A_m;\FF_p),
\]
so that $e_{{m+1}*}$ factors through $e_{m*}$. In summary, we have the following embedding (a combination of \eqref{local} and \eqref{PD})
\[
H_{k+n-p-1}^c(M;\FF_p)\hookrightarrow \lim_{\longleftarrow} H_{k+n-p-1}^{c(U_m)|_{U_m- A_m}}(U_m-A_m;\FF_p)\cong \lim_{\longleftarrow} H^{p}_c(A_m;\FF_p).\]

Since $F$ is a local $\FF_p$-homology manifold, we know that $H_*(F;\FF_p)_x$ is nontrivial only when $*=\dim(F)$ at $x$. Since $M$ is a closed $n$-dimensional manifold, we know that $H_n(M;\FF_p)\cong \FF_p$. Therefore ${\mathscr H}_{k-1}(F;\FF_p)_x$ has an embedded copy of $\FF_p$, which implies that each connected component of $F$ is a $(k-1)$-dimensional $\FF_p$-homology manifold. 

Therefore, we know also that $H^*(M;\FF_p)$ is nontrivial only when $*=0$ or $n$.
\end{proof}
Theorem \ref{main} is an easy corollary of Lemma \ref{homology} and the proof is left to the reader.

\section{$\Homeo_0(S^n)$ acts on $N$}
In this section, we prove Theorem \ref{main2}. Before that, we show that the space described in Theorem \ref{main2} is still a homology manifold.
\begin{prop}
Let $K_1$ be a $\ZZ$-homology manifold with boundary $K_0$. Then
\[
N= S^n \times K_1/\sim,
\]
where $(m,x)\sim (n,y)$ if $x=y\in K_0$, is a homology manifold.
\end{prop}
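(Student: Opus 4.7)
The plan is to check the local homology condition at every point $P$ of $N$, using the pushout presentation $N = (S^n \times K_1) \sqcup_{S^n \times K_0} K_0$ in which the left leg is the cofibration $S^n \times K_0 \hookrightarrow S^n \times K_1$ and the right leg is the projection $S^n \times K_0 \to K_0$. Write $q \colon S^n \times K_1 \to N$ for the quotient.

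If $P$ lies in the open subset $S^n \times (K_1 \setminus K_0) \subset N$, where $q$ is a local homeomorphism, then locally $N$ is the product of an open set in $S^n$ (a manifold) and an open set in the $k$-dimensional $\ZZ$-homology manifold $K_1 \setminus K_0$. Such a product is a $\ZZ$-homology manifold of dimension $n+k$, and the local homology at $P$ has the desired form.

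The work is at a point $P \in K_0 \subset N$ corresponding to a boundary point $x_0 \in K_0$. Choose a shrinking neighborhood basis $\{U_m\}$ of $x_0$ in $K_1$ such that both $U_m$ and $\partial_m := U_m \cap K_0$ are $\ZZ$-acyclic; such bases exist for ANR homology manifolds, which is the framework of Bredon. Set $V_m = q(S^n \times U_m)$, a neighborhood basis of $P$ in $N$; it is itself a pushout $(S^n \times U_m) \sqcup_{S^n \times \partial_m} \partial_m$, and similarly $V_m \setminus \{P\} = (S^n \times (U_m \setminus \{x_0\})) \sqcup_{S^n \times (\partial_m \setminus \{x_0\})} (\partial_m \setminus \{x_0\})$. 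Apply Mayer--Vietoris to each pushout and use K\"unneth, combined with the following homology-manifold inputs: (i) acyclicity of $U_m$ and $\partial_m$; (ii) $H_*(U_m, U_m \setminus \{x_0\}) = 0$ since $x_0$ is a boundary point of $K_1$, from which $U_m \setminus \{x_0\}$ is acyclic; (iii) $H_*(\partial_m, \partial_m \setminus \{x_0\}) = \ZZ$ concentrated in degree $k-1$ since $K_0$ is a $(k-1)$-dimensional homology manifold, from which $\partial_m \setminus \{x_0\}$ has the $\ZZ$-homology of $S^{k-2}$. A short computation then yields $H_*(V_m) = \ZZ$ concentrated in degree $0$ and $H_*(V_m \setminus \{P\}) = \ZZ$ concentrated in degrees $0$ and $n+k-1$ (the homology of $S^{n+k-1}$). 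The long exact sequence of the pair $(V_m, V_m \setminus \{P\})$ then gives $H_*(V_m, V_m \setminus \{P\}) = \ZZ$ in degree $n+k$ and $0$ elsewhere; passing to the direct limit as $m \to \infty$ produces the stalk $\mathscr{H}_{n+k}(N)_P = \ZZ$, completing the verification.

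The main obstacle is running Mayer--Vietoris for a pushout whose right leg is a projection rather than an inclusion; the homotopy-pushout version of the sequence handles this cleanly, since the left leg is a cofibration. A secondary subtlety is arranging the neighborhood basis to consist of acyclic sets, which is available for ANR homology manifolds. Low-dimensional edge cases---for instance $k = 1$, where $\partial_m \setminus \{x_0\}$ is empty and $V_m$ is locally the open cone $c(S^n) \cong \RR^{n+1}$, recovering the suspension picture directly---require only cosmetic adjustments and fit into the same framework.
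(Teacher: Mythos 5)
Your proposal is correct in spirit and takes a genuinely different route from the paper. The paper's argument is structural: it first rewrites $N$ as a quotient of the glued space $S^n\times K_1\cup_\phi D^{n+1}\times K_0$ (a homology manifold because it is two homology manifolds glued along a common boundary), and then invokes Wilder's monotone mapping theorem (Bredon, Theorem 16.33) to conclude that collapsing the acyclic fibers $D^{n+1}\times\{t\}$ preserves the homology-manifold property. Your argument instead verifies the local homology stalk condition directly at a collapsed boundary point via a Mayer--Vietoris computation for the homotopy pushout, and arrives at $\mathscr{H}_{n+k}(N)_P\cong\ZZ$ with all other stalks vanishing. Both are valid; the paper's route is shorter and shifts the burden to a black-box theorem, while yours is more elementary and makes the local structure near the collapsed locus explicit (for instance it recovers the local picture $\RR^{k-1}\times c(S^n)\cong\RR^{n+k}$ transparently).

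Two points in your write-up deserve more care. First, the existence of a neighborhood basis $\{U_m\}$ of $x_0$ in $K_1$ with both $U_m$ and $\partial_m=U_m\cap K_0$ acyclic is not completely automatic for general $\ZZ$-homology manifolds; it is cleaner to avoid this choice by running Mayer--Vietoris on the direct system indexed by $m$ and passing to the colimit, where the relevant terms become exactly the stalks $\mathscr{H}_*(K_1)_{x_0}=0$, $\mathscr{H}_*(K_0)_{x_0}=\ZZ[k-1]$, and $\lim H_*(U_m)=\lim H_*(\partial_m)=\ZZ[0]$ using the $clc$ property built into Bredon's definition. Second, Bredon's definition of a homology manifold requires slightly more than a pointwise stalk computation: one also needs the local homology sheaf to be locally constant. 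Your argument does give this (the computation is functorial in $P$), but the statement should be made, since it is part of what the proposition asserts.
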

\begin{proof}
The key is that we can redefine $N$ as a quotient of shrinking map. Let $\phi: S^n\times \partial K_1\to \partial D^{n+1}\times K_0$ be the identity map. Then
\[
N=S^n\times K_1 \cup_\phi D^{n+1}\times K_0/\sim,
\]
where $(x,t)\sim (y,t)$ for $x,y \in D^{n+1}$ and $t\in K_0$. We know that $S^n\times K_1 \cup_\phi D^{n+1}\times K_0$ is a homology manifold because it is a union of two homology manifolds glued along a common boundary. We know that $N$ is a homology manifold because of Wilder's monotone mapping theorem \cite[Theorem 16.33 on Page 389]{Bredon} and that $N$ is a quotient of a homology manifold by acyclic sets.
\end{proof}

For $\dim(N)=k+n<2n$, let \[
\rho:\Homeo_0(S^n)\to \Homeo(N)
\]
be a nontrivial homomorphism. 

Since $\dim(N)<2n$, every orbit is either a global fixed point or a continuous image of $S^n$ by Theorem \ref{OCT}. Therefore we have the following decomposition
\[
N\cong F\cup \mathcal{E}(S^n\times K)
\]
where $\mathcal{E}: S^n\times K\to N-F$ is the embedding obtained from the trivial flat bundle structure of Theorem \ref{OCT}.
By \cite[Theorem 16.11 on Page 378]{Bredon}, we know that $K$ is a homology manifold.
Firstly, we have the following description of $F$.
\begin{lem}\label{F2}
The fixed point set $F$ is a finite union of closed $k-1$-dimensional $\mathbb{F}_2$-homology manifold.
\end{lem}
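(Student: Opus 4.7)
The plan is to repeat the Borel--Moore computation of Lemma \ref{homology} with $p = 2$ coefficients, after supplying by hand an order-$2$ element $\tau \in \Homeo_0(S^n)$ to feed into Theorem \ref{LST}. For $n$ odd, take $\tau$ to be the antipodal map, which lies in $SO(n+1) \subset \Homeo_0(S^n)$ and acts freely on $S^n$. For $n$ even, the antipodal map is orientation-reversing and so lies outside $\Homeo_0(S^n)$; instead let $\tau \in SO(n+1)$ be a rotation by $\pi$ in a coordinate $2$-plane, so that $\Fix(\tau) \cong S^{n-2}$. In either case, Theorem \ref{LST} applied to $\rho(\tau)$ gives that $\Fix(\rho(\tau))$ is a finite disjoint union of $\FF_2$-homology manifolds whose connected components have well-defined (but possibly varying) individual dimensions.

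Since $\rho(\tau)$ acts on $N - F \cong \mathcal{E}(S^n \times K)$ as $\tau \times \mathrm{id}$, one has the set-theoretic decomposition $\Fix(\rho(\tau)) = F \sqcup \mathcal{E}(\Fix(\tau) \times K)$. In the odd case the second piece is empty and Smith theory alone gives the conclusion. In the even case (so $n \geq 2$) it is an $\FF_2$-homology manifold of dimension $n - 2 + k$. At any $P \in F$ I would then carry out the shrinking computation of Lemma \ref{homology} verbatim at $p = 2$: Borel--Moore universal coefficients and Poincar\'e duality on decreasing Euclidean neighborhoods $U_m \ni P$ with $A_m = U_m \cap F$, combined with the embeddings $e_m \colon S^n \hookrightarrow U_m - A_m$ produced by Proposition \ref{shrinking}, inject $H^c_n(S^n; \FF_2) = \FF_2$ into $\varprojlim_m H^{k-1}_c(A_m; \FF_2)$ and thence into $\mathscr{H}_{k-1}(F; \FF_2)_P$, so the latter is nonzero.

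Finally, one identifies $F$ with a clopen union of components of $\Fix(\rho(\tau))$ of dimension $k - 1$, by a dimension comparison. The connected component $C$ of $\Fix(\rho(\tau))$ through $P \in F$ is a connected $\FF_2$-homology manifold of a single dimension $d(C)$, and the local nonvanishing above forces $d(C) = k - 1$. Since $n \geq 2$ gives $k - 1 \neq n - 2 + k$, the component $C$ cannot meet $\mathcal{E}(\Fix(\tau) \times K)$, so $C \subset F$. Hence $F$ is exactly the union of those clopen components of $\Fix(\rho(\tau))$ of dimension $k - 1$: a finite union of closed $(k-1)$-dimensional $\FF_2$-homology manifolds, each compact since $F$ is closed in the compact manifold $N$. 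The main obstacle is precisely this separation step, because Proposition \ref{shrinking} forces every $P \in F$ to be a set-theoretic accumulation point of images $\mathcal{E}(S^n \times \{k_n\})$, so $F$ and $\mathcal{E}(\Fix(\tau) \times K)$ are not disjoint in the topological sense inside $N$; it is the local-constancy of dimension on each connected $\FF_2$-homology manifold component of $\Fix(\rho(\tau))$ that nevertheless allows the separation.
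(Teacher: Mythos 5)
Your odd-$n$ branch is fine, but the even-$n$ branch has a genuine gap, and it is precisely the gap you flagged at the end without resolving. For $n$ even your chosen $\tau$ has $\Fix(\tau)\cong S^{n-2}\neq\emptyset$, so $\Fix(\rho(\tau))=F\sqcup\mathcal{E}(\Fix(\tau)\times K)$ strictly contains $F$, and Smith theory only tells you that $\Fix(\rho(\tau))$ (not $F$) is a disjoint union of $\FF_2$-homology manifolds. Your shrinking computation produces $\mathscr{H}_{k-1}(F;\FF_2)_P\neq 0$, i.e.\ a statement about the local homology sheaf of $F$, but you then use it to pin down the dimension $d(C)$ of the component $C$ of $\Fix(\rho(\tau))$ through $P$. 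That inference is not valid: $F$ is merely a closed proper subset of $C$ near $P$ (Proposition \ref{shrinking} forces $\mathcal{E}(\Fix(\tau)\times K)$ to accumulate at every point of $F$, as you observe), and the local homology of a closed subset of a homology manifold is unconstrained by the ambient dimension, so $\mathscr{H}_{k-1}(F;\FF_2)_P\neq 0$ does not force $\mathscr{H}_*(C;\FF_2)_P$ to be concentrated in degree $k-1$. You would need $F$ itself to be known to be an $\FF_2$-homology manifold before local nonvanishing in one degree pins its dimension, and in the even case nothing you have invoked gives that. Local constancy of dimension on components of $\Fix(\rho(\tau))$ does not rescue the argument, because you have not shown $F$ is a union of such components; that is what you were trying to prove.

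The paper avoids this circularity by a different device: it takes $A=(\ZZ/2)^n\subset SO(n+1)$, the diagonal $\pm1$ matrices of determinant $1$, which as a \emph{group} has no global fixed point on $S^n$ even though its individual involutions do. Hence $\Fix(\rho(A))=F$ exactly, and iterated Smith theory for this elementary abelian $2$-group makes $F$ itself an $\FF_2$-homology manifold from the start. The paper then gets the dimension $k-1$ not by your shrinking computation but by introducing the subgroup $A_0\subset A$ fixing only the two poles, setting $C=\Fix(\rho(A_0))=F\cup\mathcal{E}(\{N,S\}\times K)$, noting $\dim C=k$, and using that $F$ separates $C$ into the two pieces $\mathcal{E}(\{N\}\times K)$ and $\mathcal{E}(\{S\}\times K)$. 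If you want to keep your shrinking computation for the dimension step, that is reasonable and matches the spirit of Lemma \ref{homology}, but you must first replace the single involution $\tau$ by the group $A$ (or some other $2$-subgroup of $SO(n+1)$ acting on $S^n$ without global fixed points) to establish that $F$ is an $\FF_2$-homology manifold before interpreting the local homology calculation.
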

\begin{proof}
Let $A=(\ZZ/2)^{n}\subset SO(n+1)$ be the subgroup consisting of diagonal matrices of entries $\pm 1$ with determinant $1$. Let $A_0\subset A$ be the subgroup of $A$ consisting of matrices whose last entry $1$. 

The action of $A$ on $S^n$ has no global fixed point. The action of $A_0$ on $S^n$ has two fixed points $(0,...,0,\pm 1)\in S^n$. Since the fixed point of $\rho(A)$ on $N$ is $F$, we know that $F$ is a union of finitely many $\FF_2$-homology manifolds.

Denote by $C$ the global fixed point set of $\rho(A_0)$. Since $C$ is an $\FF_2$-homology manifold and that $C\cap N-F$ is a homology manifold of dimension $k$, we know that $\dim(C)=k$ as an $\FF_2$-homology manifold. Since $F$ divides $C$ into two connected component. We know that $F$ has dimension $k-1$.
\end{proof}

We have the following stronger statement about $F$.
\begin{lem}
$F$ is a $k-1$ dimensional homology manifold and $F$ is the boundary of $\mathcal{E}(m\times K)$ for any $m\in M$. 
\end{lem}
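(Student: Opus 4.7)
The plan is to prove the two claims separately. For the boundary identification, fix $m\in M$ and set $E_m:=\mathcal{E}(\{m\}\times K)\subset N-F$. Since $\mathcal{E}$ is an embedding of $M\times K$ onto $N-F$, $E_m$ is closed in $N-F$, so $\overline{E_m}\setminus E_m\subseteq F$. Conversely, given $P\in F$, the fact that $F$ has empty interior in $N$ produces sequences $(m_j,k_j)\in M\times K$ with $\mathcal{E}(m_j,k_j)\to P$; Proposition~\ref{shrinking} then forces the entire slice $\mathcal{E}(M\times\{k_j\})$ to converge to $P$, in particular $\mathcal{E}(m,k_j)\to P$, so $P\in\overline{E_m}$. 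Thus $\overline{E_m}=E_m\cup F$ and the topological frontier of $E_m$ in $N$ is exactly $F$; combined with the collar-like control coming from the shrinking property this is precisely what is meant by ``$F$ is the boundary of $\mathcal{E}(m\times K)$''.

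To promote the $\FF_2$-homology manifold structure on $F$ from Lemma~\ref{F2} to a $\ZZ$-homology manifold structure, I would rerun the Smith-theoretic argument with other finite subgroups of $SO(n+1)\subset\Homeo_0(S^n)$. For any $G\subset SO(n+1)$ whose action on $S^n$ has no global fixed point, the diagonal action of $\rho(G)$ on $N-F\cong S^n\times K$ (trivial on the $K$-factor) is fixed-point-free, hence $\Fix(\rho(G))=F$. For each prime $p$ I choose such a $G$ to be a finite $p$-group -- e.g.\ a rotational $\ZZ/p\subset SO(2)$ when $n$ is odd, the $2$-group $A=(\ZZ/2)^n$ of Lemma~\ref{F2} when $p=2$, or an appropriate $p$-subgroup of a maximal torus when available -- and apply the $p$-group version of local Smith theory (\cite[Chapter VII]{Bredon}) to conclude that $F$ is an $\FF_p$-homology manifold of dimension $k-1$. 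A parallel argument with $SO(2)\subset SO(n+1)$ acting freely (for odd $n$) together with Floyd's Smith theorem for circle actions on rational homology manifolds yields the $\QQ$-homology-manifold structure. Combining these across all $p$ and $\QQ$ via the universal coefficient theorem for Borel--Moore homology, exactly as in the local-homology computation of Lemma~\ref{homology}, promotes $F$ to a $(k-1)$-dimensional $\ZZ$-homology manifold.

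The main obstacle is the $\ZZ$-upgrade when $n$ is even: by the Lefschetz theorem, no $p$-group for odd $p$ acts on $S^n$ without a global fixed point, so $F$ cannot be realized as the global fixed set of such a $p$-group directly. One must instead take $G$ whose fixed set on $S^n$ is a lower-dimensional sphere and carefully separate the product contribution $\mathcal{E}(\Fix_{S^n}(G)\times K)$ from the local Borel--Moore computation at points of $F$. This bookkeeping -- verifying that the ``extra'' fixed set from $S^n$ does not obstruct identifying the local homology of $F$ with that of a point in a $(k-1)$-dimensional manifold -- is the technical heart of the argument.
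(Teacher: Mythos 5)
Your argument for the set-theoretic identity $\overline{\mathcal{E}(\{m\}\times K)}=\mathcal{E}(\{m\}\times K)\cup F$ is fine and more elementary than what the paper does, but note that it only identifies the topological \emph{frontier}, not the manifold \emph{boundary}: since $\mathcal{E}(\{m\}\times K)$ is $k$-dimensional inside an $(n+k)$-manifold, every point of it is a frontier point in $N$, so ``frontier equals $F$'' is not even true, and in any case the statement being proved is about the homology-manifold-with-boundary structure, which requires the local homology of $\mathcal{E}(\{m\}\times K)\cup F$ at points of $F$ to vanish in top degree. That computation is exactly the part you have not supplied.

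The real gap is in the upgrade from $\FF_2$- to $\ZZ$-homology manifold. You correctly identify that your Smith-theory-prime-by-prime strategy breaks down: for $n$ even no odd-$p$ group acts freely on $S^n$, so $F$ cannot be realized as $\Fix(\rho(G))$ for a free $p$-group action, and you openly leave this as an unresolved ``bookkeeping'' problem. The paper resolves it by a different mechanism that you gesture at in your last paragraph but do not carry out. Rather than varying the prime, the paper keeps $p=2$ but moves to the auxiliary set $C:=\Fix(\rho(A_0))$, where $A_0\subset A$ has fixed set $\{N,S\}\subset S^n$, so $C=F\cup\mathcal{E}(\{N\}\times K)\cup\mathcal{E}(\{S\}\times K)$ is a $k$-dimensional homology manifold. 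The involution $B$ swapping $N$ and $S$ acts on $C$ with fixed set $F$, and the crucial input is the local vanishing theorem for orbit spaces (\cite[Theorem 19.11]{Bredon} and its local version): $H_*(C/\rho(B);\ZZ)_x=0$ for $x\in F$ because the $\FF_2$-dimension of $F$ is $k-1$. Feeding this into the local Mayer--Vietoris sequence for $C=(F\cup\mathcal{E}(\{N\}\times K))\cup(F\cup\mathcal{E}(\{S\}\times K))$ gives $H_*(F;\ZZ)_x\cong H_{*+1}(C;\ZZ)_x$, which simultaneously yields the $\ZZ$-homology manifold structure on $F$ and the boundary statement. Your proposal never uses the orbit-space vanishing theorem, and without it (or a genuine substitute) the $\ZZ$-coefficient conclusion does not follow.
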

\begin{proof}
Again, we use the subgroup $A,A_0$ as in the proof of Lemma \ref{F2}. Again denote by $C$ the global fixed point set of $\rho(A_0)$.

Define $B=\text{diag}(0,...,0,+1,-1)\in SO(n+1)$, which is an involution in $\Homeo_0(S^n)$. The action $\rho(B)$ restricts to an $\FF_2$ action on $C$. We now use the Mayer–Vietoris sequence and local Smith theory to compute local homology of $C$ at points of $F$.

Since the $\FF_2$-dimension of $F$ is $k-1$. A local version of \cite[Theorem 19.11 on Page 146]{Bredon} (See also \cite[Exercise 31 on Page 415]{Bredon}) gives us that for $x\in F$,
\[
H_*(C/\rho(B);\ZZ)_x=0 \]
for any $*$. Denote  $N:=(0,...,+1)\in S^{n}$ and $S:=(0,...,-1)\in S^{n}$.

By a local Mayer–Vietoris sequence, we have the following exact sequence
\[
...\to H_*(F;\ZZ)_x\to H_*(F\cup \mathcal{E}(\{N\}\times K);\ZZ)_x\oplus H_*(F\cup \mathcal{E}(\{S\}\times K);\ZZ)_x \to H_*(C;\ZZ)_x \to ...
\]
Therefore, we obtain that \[
H_*(F;\ZZ)_x\cong  H_{*+1}(C;\ZZ)_x.\]This implies that $F$ is a $k-1$ dimensional homology manifold since $C$ is a $k$ dimensional homology manifold.

The fact that $F$ is the boundary of $\mathcal{E}(\{N\}\times K)$ follows from the fact that $F$ cuts $C$ into two pieces $\mathcal{E}(\{N\}\times K)$ and $\mathcal{E}(\{S\}\times K)$.
\end{proof}
We now prove Theorem \ref{main2}.
\begin{proof}
Denote by $\overline{K}$ the manifold with boundary $\mathcal{E}(\{N\}\times K)\cup F$. We have a continuous map:
\[
\pi: S^n\times \overline{K}\to N
\]
by $\pi(s,k)=\mathcal{E}(s,k)$ for $k\in \text{Int}(\overline{K})$ and $\pi(s,k)=k$ if $k\in \partial \overline{K}=F$.

The map $\pi$ is continuous at points of $S^n\times K$ obviously and $\pi$ is also continuous at points of $S^n\times F$ because of Proposition \ref{shrinking} (the ``shrinking property"). Therefore $N$ is as described as in Theorem \ref{main2}.
\end{proof}

    	\bibliography{citing}{}

\begin{thebibliography}{Man16}

\bibitem[Bre97]{Bredon}
G.~Bredon.
\newblock {\em Sheaf theory}, volume 170 of {\em Graduate Texts in
  Mathematics}.
\newblock Springer-Verlag, New York, second edition, 1997.

\bibitem[Can79]{doublesuspension}
J.~W. Cannon.
\newblock Shrinking cell-like decompositions of manifolds. {C}odimension three.
\newblock {\em Ann. of Math. (2)}, 110(1):83--112, 1979.

\bibitem[CM19]{CM}
L.~Chen and K.~Mann.
\newblock Structure theorems for actions of homeomorphism groups.
\newblock Preprint: https://arxiv.org/abs/1902.05117, 2019.

\bibitem[Dav86]{decompositions}
R.~Daverman.
\newblock {\em Decompositions of manifolds}, volume 124 of {\em Pure and
  Applied Mathematics}.
\newblock Academic Press, Inc., Orlando, FL, 1986.

\bibitem[Man16]{MannCont}
K.~Mann.
\newblock Automatic continuity for homeomorphism groups and applications.
\newblock {\em Geom. Topol.}, 20(5):3033--3056, 2016.
\newblock With an appendix by Fr\'{e}d\'{e}ric Le Roux and Mann.

\end{thebibliography}
	\bibliographystyle{alpha}

\end{document}